\documentclass{amsart}

\usepackage{amsmath,amsfonts,amsthm,amssymb,graphics,graphicx, verbatim,color}

\newtheorem{thm}{Theorem}[section]

\newtheorem{lem}[thm]{Lemma}

\newtheorem{prop}[thm]{Proposition}
\theoremstyle{remark} \newtheorem{remark}[thm]{Remark}
\newcommand\edit[1]{{\color{black}#1}}

\newcommand{\norm}[1]{\left|\!\left|{#1}\right|\!\right|}
\newcommand{\R}{\ensuremath{\mathbb{R}}}

\newcommand\Id{\operatorname{Id}}
\renewcommand\Re{\operatorname{Re}}

\newcommand\chiev{\chi^{\mathrm{ev}}}
\newcommand{\indic}{\operatorname{1\negthinspace l}}

\title[Eigenfunction estimates on manifolds of nonpositive curvature]{Improvement of eigenfunction estimates on manifolds of nonpositive curvature}

\author{Andrew Hassell and Melissa Tacy}

\email{Andrew.Hassell@anu.edu.au}
\email{mtacy@math.northwestern.edu}

\address{Mathematical Sciences Institute, Australian National University, Canberra 0200 ACT Australia}
\address{Department of Mathematics, Northwestern University, Evanston 60208 IL, USA}

\keywords{Eigenfunction estimates, nonpositive curvature, manifolds without conjugate points, logarithmic improvement, finite propagation speed}

\thanks{A. H. was supported by Australian Research Council Future Fellowship FT0990895 and Discovery Grants DP1095448 and DP120102019.}

\begin{document}
\begin{abstract}
Let $(M,g)$ be a compact, boundaryless manifold of dimension $n$ with the property that either (i) $n=2$ and $(M,g)$ has no conjugate points, or (ii) the sectional curvatures of $(M,g)$ are nonpositive. Let $\Delta$ be the positive Laplacian on $M$ determined by $g$. 
We study the $L^{2}\to{}L^{p}$ mapping properties of a spectral cluster of $\sqrt{\Delta}$  of width $1/\log\lambda$. Under the geometric assumptions above, \cite{berard77} B\'{e}rard obtained a logarithmic improvement for the remainder term of the eigenvalue counting function which directly leads to a $(\log\lambda)^{1/2}$ improvement for H\"ormander's estimate on the $L^{\infty}$ norms of eigenfunctions. In this paper we extend this improvement to the $L^p$ estimates for all $p>\frac{2(n+1)}{n-1}$.
\end{abstract}
\maketitle

\section{Introduction}
We study the growth of the $L^p$ norms of high eigenvalue eigenfunctions of the Laplace-Beltrami operator on compact manifolds. That is, we seek estimates of the form
\begin{equation}
\norm{u}_{L^{p}(M)}\lesssim{}f(\lambda,n,p)\norm{u}_{L^{2}}\label{estimatef}
\end{equation}
where $u$ is an eigenfunction, $\Delta{}u=\lambda^{2}u$, of the Laplace-Beltrami operator on a $n$-dimensional, compact, boundaryless Riemannian manifold $(M,g)$
(we adopt the sign convention that the Laplacian $\Delta$ is a positive operator). As is well known, $L^{2}\to{}L^{\infty}$ estimates for eigenfunctions follow directly from eigenvalue multiplicity bounds, and  H\"ormander's counting function remainder estimates \cite{hormander68} yield a bound 
$$\norm{u}_{L^{\infty}}\lesssim\lambda^{\frac{n-1}{2}}\norm{u}_{L^{2}}.$$
For general manifolds without any additional geometric assumptions Sogge \cite{sogge88} obtained $L^{p}$ estimates of the form
\begin{equation}\begin{gathered}
f(\lambda,n,p)=\lambda^{\delta(n,p)} \\
\delta(n,p)=\begin{cases}
\frac{n-1}{2}-\frac{n}{p} & \frac{2(n+1)}{n-1}\leq{}p\leq\infty\\
\frac{n-1}{4}-\frac{n-1}{2p}  & 2\leq{}p\leq\frac{2(n+1)}{n-1}\end{cases}.
\end{gathered}\label{deltanp}\end{equation}
  These estimates are in fact for spectral clusters of window width one. While they are sharp for such clusters the only known sharp eigenfunction examples are in cases of high multiplicity of the spectrum. For the $L^{2}\to{}L^{\infty}$ estimates more is known. Sogge and Zelditch \cite{sogge02} and Sogge, Zelditch and Toth \cite{sogge11} investigated the conditions on $M$ required for maximal $L^{\infty}$ growth. They determined that to achieve the sharp $L^{\infty}$ bound given by \eqref{deltanp} it is necessary that at some point $x\in{}M$ both that the set of  directions in $S^{\star}M$  that loop back to $x$ has positive measure and that the first return map be recurrent. As $L^{2}\to{}L^{\infty}$ estimates follow directly from multiplicity estimates  an improvement in the remainder term of the counting function automatically implies an improvement in $L^{\infty}$ estimates.  It is expected that systems whose classical dynamics exhibit chaotic behaviour will have lowered multiplicity and therefore lowered $L^{\infty}$ growth. In 1977 B\'{e}rard \cite{berard77} obtained a $\log\lambda$ improvement for  the remainder term in the counting function (and therefore an improved multiplicity bound) for manifolds with nonpositive section curvature (in two dimensions, the condition of no conjugate points suffices). His result directly implies an improvement to the $L^{\infty}$ bounds of
  $$\norm{u}_{L^{\infty}}\lesssim\frac{\lambda^{\frac{n-1}{2}}}{(\log\lambda)^{1/2}}\norm{u}_{L^{2}}.$$
  By interpolation with Sogge's $p=\frac{2(n+1)}{n-1}$ estimate we can therefore easily obtain
  $$\norm{u}_{L^{p}}\lesssim\frac{\lambda^{\frac{n-1}{2}-\frac{n}{p}}}{(\log\lambda)^{\frac{1}{2}-\frac{n+1}{p(n-1)}}}\norm{u}_{L^{2}}\quad{}p\geq{}\frac{2(n+1)}{n-1}.$$
  In this paper we show that under the same assumptions as in B\'{e}rard \cite{berard77}, the improvement by a factor of $(\log \lambda)^{1/2}$ persists for $p>\frac{2(n+1)}{n-1}$. That is, our main result is 
\begin{thm}\label{maintheorem}
Let $u$ be \edit{an} eigenfunction of the Laplacian $-\Delta{}u=\lambda^{2}u$ on a compact boundaryless Riemannian manifold $(M,g)$ of dimension $n$, such that either (i) $n=2$ and $(M,g)$ has no conjugate points or (ii) the sectional curvatures of $(M,g)$ are nonpositive. Then we have the estimate
$$\norm{u}_{L^{p}}\leq{}C_{p}\frac{\lambda^{\frac{n-1}{2}-\frac{n}{p}}}{(\log\lambda)^{1/2}}$$
in the range
\begin{equation}
p>\frac{2(n+1)}{n-1}. 
\label{prange}\end{equation}
\end{thm}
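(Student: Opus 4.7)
The plan is to replace the eigenfunction by its image under a spectral cluster operator of narrow width $(\log\lambda)^{-1}$ and to obtain a sharp $L^{2}\!\to\!L^{p}$ mapping bound for this narrow cluster by lifting Sogge's $TT^{*}$ argument to the universal cover and invoking B\'erard's parametrix on long time scales. Fix $\chi\in\mathcal{S}(\mathbb{R})$ with $\chi(0)=1$ and $\hat\chi$ supported in $[-1,1]$, choose $T=c\log\lambda$ for a small constant $c>0$ to be determined, and set
$$\sigma_{\lambda} \;=\; \chi\bigl(T(\sqrt{\Delta}-\lambda)\bigr).$$
Because $\chi(0)=1$, every eigenfunction $u$ with $\sqrt{\Delta}u=\lambda u$ satisfies $\sigma_{\lambda}u=u$, so the theorem reduces to proving
$$\|\sigma_{\lambda}\|_{L^{2}\to L^{p}} \;\lesssim\; \frac{\lambda^{(n-1)/2-n/p}}{(\log\lambda)^{1/2}}, \qquad p>\tfrac{2(n+1)}{n-1}.$$
By Fourier inversion, the Schwartz kernel of $\sigma_{\lambda}$ is
$$K_{\lambda}(x,y) \;=\; \frac{1}{2\pi T}\int \hat\chi(t/T)\, e^{i\lambda t}\, U(t,x,y)\,dt,$$
where $U(t,x,y)$ denotes the kernel of the half-wave propagator $e^{-it\sqrt{\Delta}}$.

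Next I would apply $TT^{*}$: since $\sigma_{\lambda}$ is self-adjoint, $\|\sigma_{\lambda}\|_{L^{2}\to L^{p}}^{2}=\|\sigma_{\lambda}^{2}\|_{L^{p'}\to L^{p}}$, and in the range $p\geq 2(n+1)/(n-1)$ this operator norm is controlled, in the manner of Sogge, by pointwise and oscillatory estimates on the Schwartz kernel of $\sigma_{\lambda}^{2}$. I would split the resulting time integral into a short-time piece $|t|\leq 1$ and a long-time piece $1\leq|t|\leq T$. On the short piece the Hadamard parametrix for $U$ on $M$ itself applies, and Sogge's standard stationary phase computation yields the usual $L^{p'}\!\to\!L^{p}$ bound multiplied by the prefactor $T^{-1}$ coming from the narrow window. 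For the long piece one lifts to the universal cover $\widetilde{M}$ and uses
$$U(t,x,y) \;=\; \sum_{\gamma\in\Gamma}\widetilde{U}(t,\tilde x,\gamma\tilde y),$$
where $\Gamma$ is the deck group and $\widetilde{U}$ is the half-wave kernel on $\widetilde{M}$. Under the hypotheses (no conjugate points in dimension two, nonpositive sectional curvature in higher dimensions) $\widetilde{M}$ has no conjugate points and the Hadamard parametrix for $\widetilde{U}$ is globally valid, with amplitude controlled via Jacobi field comparison with the Euclidean (or constant curvature) model.

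The main obstacle will be the stationary phase analysis for the contribution of each $\gamma\neq\mathrm{id}$. For $r=d(\tilde x,\gamma\tilde y)\in[1,T]$, each term should contribute a kernel of size $\lambda^{(n-1)/2}r^{-(n-1)/2}T^{-1}$ multiplied by an oscillatory factor amenable to Sogge-type oscillatory integral estimates, while the number of $\gamma$ with $r\leq T$ grows at most like $e^{hT}$, where $h$ is the volume entropy of $\widetilde{M}$. Choosing $c<1/(2h)$ in $T=c\log\lambda$ keeps the geometric sum polynomial in $\lambda$, and the power decay $r^{-(n-1)/2}$ together with the oscillatory gain absorbs the long-time contribution into the short-time estimate. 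This produces the sharp endpoint bound at $p=2(n+1)/(n-1)$ with gain $(\log\lambda)^{-1/2}$; interpolation with B\'erard's improved $L^{\infty}$ bound then extends the estimate to all $p>2(n+1)/(n-1)$, completing the proof.
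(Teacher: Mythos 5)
Your overall architecture --- a cluster of width $(\log\lambda)^{-1}$, $TT^{*}$, a short-time Hadamard parametrix, and a long-time lift to the universal cover via B\'erard --- matches the paper's, but there are two genuine gaps. The first is structural: you lift the \emph{half-wave} kernel $U(t,x,y)$ to $\widetilde M$ and sum over $\Gamma$, truncating to the $O(e^{hT})$ elements with $d(\tilde x,\gamma\tilde y)\le T$. The half-wave kernel is not supported in $\{d(x,y)\le |t|\}$ (only the cosine and sine propagators have finite propagation speed at the level of kernels), so the sum over $\Gamma$ is not finite and cannot be truncated as you claim. This is precisely why the paper works with the \emph{even} cluster $\chi(w^{-1}(\mu-\lambda))+\chi(w^{-1}(-\mu-\lambda))$, which is expressible through $\cos(t\sqrt{\Delta})$: finite propagation speed of the cosine kernel is what makes the sum over deck transformations finite, with exponentially many terms. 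The half-wave parametrix is fine for the short-time piece (the paper uses it there too), but the long-time piece must be set up with the cosine kernel.

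The second gap is the decisive one. You assert that each $\gamma\ne\mathrm{id}$ contributes a kernel of size $\lambda^{(n-1)/2}r^{-(n-1)/2}T^{-1}$ with ``an oscillatory factor amenable to Sogge-type oscillatory integral estimates,'' and that summing these and absorbing them into the short-time term yields the \emph{endpoint} bound at $p=2(n+1)/(n-1)$ with gain $(\log\lambda)^{-1/2}$. No argument is given, and none is available by the means you describe: whether any logarithmic improvement holds at the kink point is explicitly open. The paper does not attempt this. Its long-time pieces are estimated only by interpolating a trivial $L^{2}\to L^{2}$ bound with an $L^{1}\to L^{\infty}$ kernel bound $O(\lambda^{(n-1)/2+\eta})$ obtained from B\'erard's parametrix, the exponential growth of the amplitudes $u_k$, and the exponential count of deck transformations (all controlled by taking $T=\alpha\log\lambda$ with $\alpha$ small). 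That interpolation gives $\lambda^{(n-1)/2-(n-1)/p+\eta(1-2/p)}$, which beats $\lambda^{2\delta(n,p)}$ exactly when $p>2(n+1)/(n-1)$ and fails to do so at the endpoint, where the two exponents coincide; this is the sole source of the restriction in the theorem. Note also that if your endpoint claim held, interpolating with B\'erard's $L^{\infty}$ bound would give the full $(\log\lambda)^{-1/2}$ gain at the kink point itself, stronger than anything known. To repair the argument, replace the ``oscillatory absorption'' step by the crude interpolation above and accept the resulting restriction to $p$ strictly above the kink point; the short-time piece alone carries the sharp $\lambda^{2\delta(n,p)}T^{-1}$ bound for all such $p$.
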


We remark that the constant $C_p$ we obtain blows up as 
%  $$\norm{u}_{L^{p}}\leq{}C_{p}\frac{\lambda^{\frac{n-1}{2}-\frac{n}{p}}}{(\log\lambda)^{1/2}}\norm{u}_{L^{2}}\quad{}p>\frac{2(n+1)}{n-1}$$
%  with a constant $C_{p}$ that blows up as 
$p$ tends to the `kink point' $\frac{2(n+1)}{n-1}$. It remains unknown whether a logarithmic improvement, of any power, is valid at $p=\frac{2(n+1)}{n-1}$. For surfaces there are some similar results known below the kink point, that is $p<6$. Bourgain \cite{bourgain09a} and Sogge \cite{sogge11a} study the relationship between $\norm{u}_{L^{4}(M)}$ and $\norm{u}_{L^{2}(\gamma)}$ where $\gamma$ is a geodesic. Sogge shows that to improve the $L^{p}(M)$ estimates for $2<p<6$ it is necessary and sufficient to improve the $L^{2}$ restriction estimates. In related work Sogge and Zelditch \cite{sogge12} and Chen and Sogge \cite{chen12a} study improvements to geodesic restriction theorems for low $p$ in the case of nonpositive curvature. In the high $p$ range  Chen \cite{chen12} obtains logarithmic improvements to the restriction theorems of Burq, G\'{e}rard and Tvetkov \cite{burq07}. Also in the setting of nonpositive curvature Bourgain, Shao, Sogge and Yao \cite{bourgain12} obtain logarithmic improvements for $L^{p}$ norms of resolvent operators.

At this point we note the history of this paper. Theorem \ref{maintheorem} was originally presented by the second author at a conference at Dartmouth College in 2010 in the special case of constant negative curvature. Following the general techniques outlined in that presentation Chen \cite{chen12} in 2012 proved restriction estimates, in the general nonpositive curvature case, which are logarithmic improvements on estimates of Burq, G\'erard and Tzvetkov \cite{burq07}. This paper presents the original result in this more general setting.
  
  To obtain eigenfunctions estimates we study a smoothed spectral cluster $\chiev_{\lambda,w}(\sqrt{\Delta})$ defined in the following section. Here the spectral cluster is centred at $\lambda$ and has effective width $w$. This spectral cluster operator is based on  Sogge's construction \cite[Section 5.1]{sogge93}, with a slight variation as in \cite[Section 8.1]{GHS}.  Sogge's estimates correspond to  the case $w=1$. B\'{e}rard obtained his logarithmic improvements by shrinking the window width to $1/\log\lambda$. We will likewise aim to shrink window widths by a logarithmic factor. 
   B\'{e}rard's method relied on expressing the solution operator the the wave equation  on $M$ as a sum of shifted solutions operators on the universal cover of $M$, denoted $\widetilde{M}$. If $M$ has no conjugate points, $\widetilde{M}$ has an infinite injectivity radius. The $\log\lambda$ improvement is then obtained by estimating the contribution from each copy of the fundamental domain and estimating the number of copies that can be reached by finite speed propagation in $\log\lambda$ time.
   
%%%%%%%%%%%%%%%%%%%%%%%%%%%%%%%%%%%%%%%%%%
%%%%%%%%%%%%%%%%%%%%%%%%%%%%%%%%%%%%%%%%%%
   
\section{Spectral cluster operator}
Our spectral cluster operator $\chiev_{\lambda,w}(\sqrt{\Delta})$ is defined as follows. 
 Let $\chi$ be a Schwartz function such that $\chi(0)=1$ and $\hat{\chi}$ is supported in $[\epsilon,2\epsilon]$. Then that $\chi(0) > 0$, and by taking $\epsilon$ sufficiently small, we can arrange that $\Re \chi \geq c > 0$ on $[0,1]$. We then define, following \cite{GHS}, and for $0 < w \leq 1$, 
  $$\chiev_{\lambda,w}(\mu)=\chi\left(\frac{\mu-\lambda}{w}\right) + \chi\left(\frac{-\mu-\lambda}{w}\right).$$
  The first term on the right hand side is, for $w=1$, precisely Sogge's spectral cluster operator. 
  Notice that $\chiev_{\lambda,w}(\mu)$ is an even function of $\mu$ and for $\lambda$ large enough we have 
 $$
\Re \chiev_{\lambda, w} \geq \frac{c}{2} \text{ on } [\lambda, \lambda + w].
$$ 
Hence we can write 
$$
(\Re \chiev_{\lambda, w})^2 - \frac{c^2}{8} = F_{\lambda,w}
$$
for some function $F_{\lambda, w}$ that is nonnegative on the interval $[\lambda, \lambda + w]$.
 Let $u$ be an $L^2$-normalized eigenfunction with eigenvalue $\lambda$. Then the $L^p$ norm of $u$ is majorized by the $L^2  \to L^p$ operator norm of $\indic_{[\lambda, \lambda + w]}(\sqrt{\Delta})$, or equivalently the $L^{p'} \to L^2$ norm of the same operator. However, for $f \in L^{p'}$, we compute as in \cite[Section 8.1]{GHS}
\begin{equation*}\begin{gathered}
\frac{c^2}{8} \big\| \indic_{[\lambda, \lambda + w]}(\sqrt{\Delta}) f \big\|_{L^2}^2 = \Big\langle \indic_{[\lambda, \lambda + w]}(\sqrt{\Delta}) f , (\Re \chiev_{\lambda, w}(\sqrt{\Delta}))^2 -  F_{\lambda, w}(\sqrt{\Delta}) \Big) f \Big\rangle \\
%= \big\| \indic_{[\lambda, \lambda + 1]}(\sqrt{\Delta_N}) \Big( 
%(\Re \chiev(\sqrt{\Delta_N}))^2 -  F_\lambda(\sqrt{\Delta_N})
%\Big) f \big\|_{L^2}^2 \\
= \Big\langle \indic_{[\lambda, \lambda + w]}(\sqrt{\Delta})  
\Re \chiev_{\lambda, w}(\sqrt{\Delta}) f, 
%\indic_{[\lambda, \lambda + 1]}(\sqrt{\Delta_N}) 
\Re \chiev_{\lambda, w}(\sqrt{\Delta}) f \Big\rangle \\ - 
\Big\langle F_{\lambda, w}(\sqrt{\Delta}) \indic_{[\lambda, \lambda + \edit{w}]}(\sqrt{\Delta}) f, \indic_{[\lambda, \lambda + w]}(\sqrt{\Delta}) f \Big\rangle \\
\leq \big\| \Re \chiev_{\lambda, w}(\sqrt{\Delta}) f \big\|_{L^2}^2  
\leq \big\| \chiev_{\lambda, w}(\sqrt{\Delta}) f \big\|_{L^2}^2.
\end{gathered}\end{equation*}
So to obtain estimates  for Laplacian eigenfunctions it is enough to estimate the operator norm of the operator $\chiev_{\lambda, w}$  from $L^{p'}$ to $L^2$ for any $w$, $0 < w \leq 1$. 
%  Note that if $u$ is an eigenfunction of eigenvalue $\lambda^{2}$ then for any $w$, $\chi_{\lambda}(w)u=u$. Therefore eigenfunctions estimates follow from $L^{2}\to{}L^{p}$ mapping estimates for $\chi_{\lambda}(w)$. 
The parameter $w$ controls the effective size of the spectral window; we aim to make $w$ as small as possible. We have
 \begin{equation}\begin{gathered}
 \chi_{\lambda,w}(\sqrt{\Delta})u=\int{}\Big( e^{it\sqrt{\Delta}/w} + e^{-it\sqrt{\Delta}/w} \Big) e^{-it\lambda/w} \hat\chi(t)u \, dt \\
 = 2w \int{} \cos(t\sqrt{-\Delta}) e^{-it\lambda} \hat\chi(wt)u \, dt.
 \end{gathered}\end{equation}

%\begin{thm}\label{maintheorem}
%Let $u$ be a eigenfunction of the Laplacian $-\Delta{}u=\lambda^{2}u$ on a compact boundaryless Riemannian manifold $(M,g)$ without conjugate points  then
%$$\norm{u}_{L^{p}}\leq{}C_{p}\frac{\lambda^{\frac{n-1}{2}-\frac{n}{p}}}{(\log\lambda)^{1/2}}$$
%in the range
%\begin{equation}
%p>\frac{2(n+1)}{n-1}
%\label{prange}\end{equation}
%\end{thm}

\begin{remark}
The point of considering $\chiev_{\lambda, w}(\sqrt{\Delta})$ rather than $\chi_\lambda(\sqrt{\Delta}/w)$ is that the former operator can be expressed in terms of the cosine kernel, i.e. the kernel of $\cos t \sqrt{\Delta}$, instead of the half-wave operator $e^{it \sqrt{\Delta}}$. This allows us to exploit the finite propagation speed of the cosine kernel (see Proposition \ref{T2estimate}). 
\end{remark}

%\begin{remark}
%The constant $C_{p}$ blows up as $p$ approaches $\frac{2(n+1)}{n-1}$ and indeed we discover that as we approach this critical value of $p$ it becomes necessary to enlarge the window width of the spectral cluster. 
%
%\end{remark}

%%%%%%%%%%%%%%%%%%%%%%%%%%%%%%%%%%%%%%%%%%%
%%%%%%%%%%%%%%%%%%%%%%%%%%%%%%%%%%%%%%%%%%%

\section{Proof of Theorem \ref{maintheorem}}
We work with a smoothed spectral cluster  of window width $1/A$, we will allow the parameter $A$ to remain free at the moment and later set it as required.  With $\chi$ defined as above,  we need $L^{2}\to{}L^{p}$ mapping estimates for 
\begin{equation}
\chiev_{\lambda, 1/A}(\sqrt{\Delta}) = \chi\left(A(\sqrt{\Delta}-\lambda)\right) + \chi\left( A(-\sqrt{\Delta}-\lambda)\right).
\label{chidef}
\end{equation}
The proof of Theorem \ref{maintheorem} reduces to showing that for any $p$ in the range \eqref{prange} there exists an $\alpha_{p}$ and a $C_{p}$ such that for $A=\alpha_{p}\log\lambda$
$$\norm{\chiev_{\lambda, 1/A}(\sqrt{\Delta})u}_{L^{p}}\leq{}C_{p}\frac{\lambda^{\delta(n,p)}}{(\log\lambda)^{1/2}}\norm{u}_{L^{2}}.$$

 We  approach this problem via a $TT^{\star}$ method. That is we seek estimates of the form
$$\norm{\chiev_{\lambda, 1/A}(\sqrt{\Delta})\chiev_{\lambda, 1/A}(\sqrt{\Delta})^{\star}u}_{L^{p}}\leq{}C_{p}\frac{\lambda^{2\delta(n,p)}}{A}\norm{u}_{L^{p'}}$$
We have that
$$\chiev_{\lambda, 1/A}(\sqrt{\Delta}) u=2\int{}e^{-it\lambda{}A}\cos \big( t{}A\sqrt{\Delta} \big) \hat{\chi}(t)u \, dt$$
so
$$\chiev_{\lambda, 1/A}(\sqrt{\Delta})^{\star}u=2\int{}e^{is\lambda{}A}\cos \big(s{}A\sqrt{\Delta} \big)\hat{\chi}(s)uds$$
therefore it suffices to estimate 
\begin{multline}
T_{\lambda,A}u= \frac1{2} \chiev_{\lambda}(1/A)\chiev_{\lambda}(1/A)^{\star}u \\ =
2 \iint{}e^{-i\lambda{}A(t-s)} \cos \big( t{}A\sqrt{\Delta} \big) \cos \big(s{}A\sqrt{\Delta} \big) \hat{\chi}(t)\hat{\chi}(s)udtds \\
= \iint{}e^{-i\lambda{}A(t-s)} \Big( \cos \big( (t+s) {}A\sqrt{\Delta} \big) + \cos \big((t-s){}A\sqrt{\Delta} \big) \Big) \hat{\chi}(t)\hat{\chi}(s)udtds .
\end{multline}
We will separate the operator into two pieces, one capturing short time propagation and the other long time propagation. We wish to set the problem up so that the short time piece captures behaviour limited to one fundamental domain while the long time pieces captures the behaviour across multiple domains. The scaling $A$ can be seen as a time scaling so short time in this setting is $A|t-s|\leq{}2\epsilon$ where $\epsilon$ is some small parameter. Accordingly let $\zeta$ be a smooth cut off function supported in $[-2\epsilon,2\epsilon]$ and equal to one on $[-\epsilon,\epsilon]$. We decompose $T_{\lambda,A}$ into the operators $T^{1}_{\lambda,A}$, $T^{2}_{\lambda,A}$ and $T^{3}_{\lambda,A}$ given by
\begin{equation}
T^{1}_{\lambda,A}=\iint{}e^{-i\lambda{}A(t-s)}\cos \big(A\sqrt{\Delta}(t-s) \big) \hat{\chi}(t)\hat{\chi}(s)\zeta(A(t-s)) \, dtds\label{T1def}
\end{equation}
\begin{equation}
T^{2}_{\lambda,A}=\iint{}e^{-i\lambda{}A(t-s)}\cos \big(\edit{A}\sqrt{\Delta}(t-s) \big) \hat{\chi}(t)\hat{\chi}(s)(1-\zeta(A(t-s))) \, dtds 
\label{T2def}
\end{equation}
and 
\begin{equation}
T^{3}_{\lambda,A}=
\iint{}e^{-i\lambda{}A(t-s)}\cos \big(\edit{A}\sqrt{\Delta}(t+s) \big) \hat{\chi}(t)\hat{\chi}(s) dtds.
\label{T3def}
\end{equation}
We treat $T^{1}_{\lambda,A}$ first. 

\begin{prop}\label{T1estimate}
Let $T^{1}_{\lambda,A}$ be given by \eqref{T1def}. Then for any $A\geq{}1$
\begin{equation}\norm{T^{1}_{\lambda,A}u}_{L^{p}}\leq{}C\frac{\lambda^{2\delta(n,p)}}{A}\norm{u}_{L^{p'}}.
\label{T1est}\end{equation}
\end{prop}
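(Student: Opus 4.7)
The plan is to reduce $T^1_{\lambda,A}$ to a standard unit-width Sogge cluster operator with an explicit $1/A$ prefactor, and then invoke Sogge's classical $L^{p'}\to L^p$ estimate. The key observation is that the short-time cutoff $\zeta(A(t-s))$ forces the effective propagation time, in the rescaled variable $\tau = A(t-s)$, to be of order $\epsilon$, so we remain well inside the injectivity radius of $M$ where the standard short-time parametrix for $\cos(\tau\sqrt{\Delta})$ is available.

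First, I would substitute $\tau = A(t-s)$ in the definition \eqref{T1def}, holding $s$ fixed so that $dt = d\tau/A$. This produces
\[
T^1_{\lambda,A} = \frac{1}{A}\int e^{-i\lambda\tau}\cos(\tau\sqrt{\Delta})\,\zeta(\tau)\,G_A(\tau)\,d\tau,
\]
where $G_A(\tau) := \int\hat{\chi}(s+\tau/A)\,\hat{\chi}(s)\,ds$. Since $\hat{\chi}$ is Schwartz with compact support, $G_A$ is smooth and bounded uniformly in $A\geq 1$; in fact $\partial_\tau^k G_A = O(A^{-k})$, so all Schwartz seminorms of $\zeta\, G_A$ on $[-2\epsilon,2\epsilon]$ are controlled independently of $A$.

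Next, I would rewrite this as $T^1_{\lambda,A} = A^{-1} K_\lambda(\sqrt{\Delta})$, where the spectral multiplier
\[
K_\lambda(\mu) := \tfrac{1}{2}\,\widehat{\zeta G_A}(\mu-\lambda) + \tfrac{1}{2}\,\widehat{\zeta G_A}(\mu+\lambda)
\]
is a Schwartz function of $\mu$ concentrated on unit scale around $\pm\lambda$, with Schwartz seminorms uniform in $A$. This is precisely the form of unit-width spectral cluster multiplier considered by Sogge~\cite{sogge88}.

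Finally, I would apply Sogge's $L^{p'}\to L^p$ estimate
\[
\|K_\lambda(\sqrt{\Delta})\|_{L^{p'}\to L^p}\lesssim \lambda^{2\delta(n,p)}, \qquad p\geq\tfrac{2(n+1)}{n-1},
\]
obtained from the short-time parametrix for $\cos(\tau\sqrt{\Delta})$ on $|\tau|\leq 2\epsilon$ together with Stein--Tomas type kernel estimates. Multiplying by the $1/A$ prefactor yields \eqref{T1est}. The step requiring the most care is the application of Sogge's estimate in the $L^{p'}\to L^p$ form (it is most commonly stated as $\|K_\lambda(\sqrt{\Delta})\|_{L^2\to L^p}\lesssim\lambda^{\delta(n,p)}$, from which the symmetric bound follows by writing $K_\lambda = \widetilde{K}_\lambda\, K_\lambda$ for a slightly wider Schwartz cluster $\widetilde{K}_\lambda$ and using duality); beyond that, the only real bookkeeping is checking that the constant in Sogge's estimate is $A$-independent, which is immediate from the uniform-in-$A$ bounds on $\zeta\, G_A$ above.
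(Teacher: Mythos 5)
Your reduction is correct, and it is essentially the alternative proof that the paper itself sketches in Remark~\ref{fc} (attributed there to Lemma 2.3 of \cite{bourgain12}) rather than the proof the paper actually carries out. The paper's own argument is self-contained: after the same rescaling, it writes the cosine in terms of half-wave operators, inserts H\"ormander's short-time parametrix, performs stationary phase first in the angular variable and then in $(s',r)$ to reach a kernel of the form $\lambda^{n-1}e^{i\lambda\psi(x,y)}b(x,y,t,\lambda)$ with $(1+\lambda d(x,y))^{-(n-1)/2}$ decay, and then runs Sogge's freezing argument (culminating in Lemma~\ref{L2lem}) to get the $L^{p'}\to L^p$ norm $\lambda^{2\delta(n,p)}$; the factor $1/A$ arises from the leftover $t$-integration exactly as in your change of variables. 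Your route buys brevity by quoting Sogge's unit-width estimate as a black box; the paper's route avoids having to upgrade that estimate from the specific multiplier $\chi(\sqrt{\Delta}-\lambda)$ to a general, $A$-dependent Schwartz multiplier. Your identification of $G_A$ and the uniformity in $A\geq 1$ of the seminorms of $\zeta G_A$ (hence of $K_\lambda$) is correct and is the crux of that upgrade. The one step that needs repair is the passage from $L^2\to L^p$ to $L^{p'}\to L^p$: you cannot literally factor $K_\lambda=\widetilde{K}_\lambda K_\lambda$ with $\widetilde{K}_\lambda$ a ``slightly wider'' Schwartz cluster, because a Schwartz function is supported on all of $\R$ and no Schwartz $\widetilde{K}_\lambda$ equals $1$ there. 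The standard fix is to write $K_\lambda(\sqrt{\Delta})=\sum_j \indic_{[j,j+1)}(\sqrt{\Delta})\,K_\lambda(\sqrt{\Delta})\,\indic_{[j,j+1)}(\sqrt{\Delta})$, apply Sogge's bound $\|\indic_{[j,j+1)}(\sqrt{\Delta})\|_{L^2\to L^p}\lesssim (1+|j|)^{\delta(n,p)}$ and its dual on each unit window, and sum the resulting series using the rapid decay $\sup_{[j,j+1)}|K_\lambda|\lesssim_N (1+|j-\lambda|)^{-N}$, which is uniform in $A$. With that substitution your proof is complete and yields \eqref{T1est} for all $A\geq 1$.
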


We give a self-contained proof of Proposition~\ref{T1estimate}. A sketch of a shorter proof which uses Sogge's estimate for unit length spectral windows as an input is given in Remark~\ref{fc}. This is similar to the argument used in \cite{bourgain12}. 

\begin{proof}
We have set the cut off function $\zeta$ such that $T^{1}_{\lambda,A}$ captures the contribution to $T_{\lambda,A}$ arising from one fundamental domain; indeed, using the finite speed of propagation of the cosine kernel and the support properties of $\zeta$, we see that its kernel is supported where $d(x,y) \leq 2\epsilon$. Therefore we do not need to sum over shifted solutions on $\widetilde{M}$ (the terms $T^{2}_{\lambda,A}$ and $T^{3}_{\lambda,A}$ will require such a sum). Hence, for this term, we choose to express $\cos\big(\sqrt{\Delta}(t) \big) $ in terms of $e^{it\sqrt{\Delta}}$ and $e^{-it\sqrt{\Delta}}$. In the short time setting this use of the half wave kernel is advantageous. %particularly when $(t-s)$ is close to zero.

Applying a change of variable $t\to{}tA$ and $s\to{}sA$ we have
\begin{equation}
T^{1}_{\lambda,A}= \frac{1}{2A^{2}}\iint{}e^{-i\lambda{}(t-s)} \Big( \sum_{\pm}  e^{\pm i\sqrt{\Delta}(t-s)} \Big)  \hat{\chi}\left(\frac{t}{A}\right)\hat{\chi}\left(\frac{s}{A}\right)\zeta((t-s)) \zeta \left(\frac{d(x,y)}{2} \right) \, dt \, ds. \label{T1rescale}
\end{equation}
%\Big( e^{i\sqrt{\Delta}(t-s)} + e^{-i\sqrt{\Delta}(t-s)} \Big)
Let us write $T^{1, \pm}_{\lambda,A}$ for the expression above where we replace the sum $\sum_{\pm}$ by either the $+$ or the $-$ term. Thus 
$T^{1}_{\lambda,A} = T^{1,+}_{\lambda,A} + T^{1,-}_{\lambda,A}$. We only treat the term $T^{1,+}_{\lambda,A}$ below ($T^{1,-}_{\lambda,A}$ is trivial to estimate as the integral corresponding to \eqref{T1oscint} below has no stationary points). 

We therefore need an expression for $e^{ i(t-s)\sqrt{\Delta}}$ where $|t-s|\leq{}2\epsilon$. In this case we may use the  short time parametrix of H\"{o}rmander \cite{hormander68} (see also Sogge \cite[Section 4.1]{sogge93})
$$e^{i(t-s)\sqrt{\Delta}}u=\int{}e^{i(\phi(x,y,\xi)-(t-s)|\xi|_{g(y)})}a(t,x,y,\xi)u(y)dyd\xi$$
where in local coordinates $|\xi|_{g(x)}=\sqrt{g^{ij}(x)\xi_{i}\xi_{j}}$ and $\phi$ satisfies the Hamilton-Jacobi equation 
$$|\nabla_{x}\phi|_{g(x)}=|\xi|_{g(y)}$$
with initial condition 
$$\phi(x,y,\xi)=0\;\mbox{when}\;\langle{}x-y,\xi\rangle=0,$$
while $a$ satisfies 
$$a(0,x,x,\xi)-1\in{}S^{-1}.$$
 Rescaling the time variables $s\to{}As$, $t\to{}At$ we can write the Schwartz kernel of $T^{1,+}_{\lambda,A}$ as 
\begin{equation}
\int{}e^{-i\lambda{}(t-s)}e^{i(\phi(x,y,\xi)-(t-s)|\xi|)}a(t-s,x,y,\xi)\hat{\chi}\left(\frac{t}{A}\right)\hat{\chi}\left(\frac{s}{A}\right)\zeta(t-s) \zeta \left(\frac{d(x,y)}{2} \right)  \frac{dt \, ds \, d\xi}{A^2}
\label{T1oscint}\end{equation}
where $|\xi| = |\xi|_{g(y)}$.  
Changing variables $s'={}t-s$ and $\xi\to\lambda\xi$ we can write this in the form

\begin{equation}\begin{gathered}
T^{1,+}_{\lambda,A}=\frac{1}{A^{2}} \int_{\epsilon A}^{2\epsilon A}  \hat{\chi}\left(\frac{t}{A}\right) 
T^{1,+}_{\lambda,A}(t) \, dt, \\
T^{1,+}_{\lambda,A}(t) := \lambda^n 
\int{}e^{-i\lambda(s'+\phi(x,y,\xi)-s'|\xi|)}\tilde{a}(s',x,y,\xi)\hat{\chi}\left(\frac{t-s'}{A}\right)\zeta(s') \, ds' \, d\xi.
\end{gathered}\label{T1tdef}\end{equation}

Here and in what follows we will abuse notation somewhat and refer to both an operator and its Schwartz kernel with the same notation. To establish \eqref{T1est}, it thus suffices to obtain a bound of the form 
\begin{equation}
\big\| T^{1,+}_{\lambda,A}(t) \big\|_{L^{p'} \to L^p} \leq C {\lambda^{2\delta(n,p)}}.
\label{T1lambdaest}\end{equation}

We will do this, following the strategy in \cite{sogge93}, by applying the stationary phase lemma to the integrals in \eqref{T1tdef}. 
We first note that  if we localize
the integral in the definition of $T^{1,+}_{\lambda,A}(t)$ away from $|\xi|_{g(y)}= 1$ then we see that the phase is nonstationary in $s'$. It follows (by integrating by parts in the $s'$ variable) that the integral is $O(\lambda^{-N})$ for every $N$. Therefore we may assume that the symbol $\tilde{a}(x,\xi)$ is supported where $1 - \epsilon \leq |\xi|_{g(y)} \leq 1 + \epsilon$. We may also assume that $\tilde a$ is supported where $d(x,y) \leq 2\epsilon$. 

To prepare for stationary phase calculations, for each fixed  $y$ we may choose a coordinate system centered at $y$ such that $|\xi|_{g(y)}=|\xi|_{\R^{n}}=|\xi|$. 
To estimate the kernel of ${T}^{1,+}_{\lambda,A}(t)$ we first perform an angular integration in $\xi$. That is we decompose $\xi$ into polar coordinates $\xi=(r,\omega)$. In these coordinates
$$\phi(x,y,\xi)=\langle{}x-y,\xi\rangle+O(|\xi||x-y|^{2})$$
$$=|x-y|r\cos(\theta(\omega))+O(r|x-y|^{2})$$
for $\theta(\omega)$ the angle between $(x-y)$ and $\xi$. Therefore the critical points occur when
$$|x-y|r\sin(\theta(\omega))\theta_{\omega_{i}}=O(r|x-y|^{2})$$
and the Hessian of $\phi$ in $\omega$ has lower bound
$$ \partial^{2}_{\omega \omega}\phi >  \frac{|x-y|r}{2}   \Id.
$$ 
We now obtain by stationary phase an expression for  ${T}^{1,+}_{\lambda,A}(t)$ of the form 
\begin{multline*}
\edit{\lambda^{n}}\int{}e^{i\lambda(s'+\phi(x,y,r,\omega(r,x,y))-s'r)}\tilde{\tilde{a}}(s',x,y,r,\lambda)\hat{\chi}\left(\frac{t-s'}{A}\right)  \zeta(s)ds'dr,
\end{multline*}
where $\omega(r,x,y)$ is the critical point of $\phi$ in the $\omega$ variable for fixed $(r, x, y)$ and
$$\left|\partial_{s'}^{\alpha}\partial_{x}^{\beta}\partial_{y}^{\gamma}\partial_{r}^{\kappa}\tilde{\tilde{a}}(s'x,y,r,\lambda)\right|\leq{}C_{\alpha,\beta,\gamma,\kappa}(1+\lambda d(x,y))^{-\frac{n-1}{2}}\left(\frac{\lambda}{1+\lambda{}d(x,y)}\right)^{|\beta|+|\gamma|}$$
as follows from \cite[Section 1.1]{sogge93} for example.  
We have 
 $\phi(x,y,r,\omega(r,x,y))=|x-y|r+O(r|x-y|^{2})$, so $\partial_r \phi(x,y,r,\omega(r,x,y)) = O(d(x,y))$. It follows that there is a nondegenerate stationary point of the phase in the $(s', r)$ variables, for $s = |x-y|+O(|x-y|^{2})$, $r = 1$ . Another application of stationary phase gives 
$$\lambda^{\edit{n-1}}e^{i\lambda{}\psi(x,y)}b(x,y,t,\lambda)$$
where
$$\psi(x,y)=|x-y|+O(|x-y|^{2})$$
is the Riemannian distance between $x$ and $y$, and
$$\left|\partial_{x}^{\beta}\partial_{y}^{\gamma}b(x,y,t,\lambda)\right|\leq{}C_{\beta,\gamma}(1+\lambda d(x,y))^{-\frac{n-1}{2}}\left(\frac{\lambda}{1+\lambda{}d(x,y)}\right)^{|\beta|+|\gamma|}.$$
Such a kernel is similar in form to those operators analyzed by Sogge \cite[Section 2.2]{sogge93} by calculating $\chi_{\lambda}(1)\chi^{\star}_{\lambda}(1)$,  and therefore we can expect them to have the same  $L^{p'}\to{}L^{p}$ mapping norm of $\lambda^{2\delta(n,p)}$. This can be easily shown by following Sogge's ``freezing'' argument, found in \cite[Section 2.2]{sogge93}. This argument is often used to prove eigenfunctions estimates (see for example \cite{tataru98}, \cite{koch07}, \cite{tacy10}). We first excise those points within $\lambda^{-1}$ of the diagonal. That is
$$T^{1,+}_{\lambda,A}(t)=W_{\lambda}+R_{\lambda}, \ \text{ with }$$
$$W_{\lambda}=\lambda^{\edit{n-1}}e^{i\lambda{}\psi(x,y)}b(x,y,t,\lambda)(1-\zeta(\lambda{}d(x,y))) \ \text{ and } $$
$$R_{\lambda}=\lambda^{\edit{n-1}}e^{i\lambda{}\psi(x,y)}b(x,y,t,\lambda)\zeta(\lambda{}d(x,y)).$$
By interpolating between $L^2 \to L^2$ and $L^1 \to L^\infty$, we see that $R_{\lambda}$ has $L^{p'}\to{}L^{p}$ mapping norm of at most  $\lambda^{(n-1)-\frac{2n}{p}}=\lambda^{2\delta(n,p)}$ and therefore we may restrict our attention to $W_{\lambda}$. Further we place cut off functions to define a principal direction. That is for any given direction $\eta$ we pick a coordinate system such that $\eta$ corresponds with the $x_{1}$ axis. Then writing $x=(x_{1},x')$ we have
$$\widetilde{W}_{\lambda} = \lambda^{\edit{n-1}}e^{i\lambda{}\psi(x,y)}b(x,y,t,\lambda)(1-\zeta(\lambda{}d(x,y)))\zeta\left(\frac{|x'-y'|}{d(x,y)}\right).$$
Note that on the support of the integrand $|x_{1}-y_{1}|\sim{}d(x,y)$. Since we may express $W_{\lambda}$ as a sum of a finite number of such operators we need only estimate $\widetilde{W}_{\lambda}$. We freeze the variables $x_{1}$ and $y_{1}$ and consider the operator
\begin{multline}\widetilde{W}_{\lambda}(x_{1},y_{1})=\lambda^{\frac{n-1}{2}}e^{i\lambda{}\psi(x_{1},x',y_{1},y')}b(x_{1},y_{1},x',y',t,\lambda)\\ 
\times\big(1-\zeta(\lambda{}|x_1 - y_1|)\big)\zeta\left(\frac{|x'-y'|}{|x_{1}-y_{1}|}\right).\label{Wtildedef}\end{multline}
We may read off a $L^{1}\to{}L^{\infty}$ estimate from the pointwise kernel bounds:
\begin{equation}
\norm{\widetilde{W}_{\lambda}(x_{1},y_{1})u}_{L^{\infty}}\lesssim{}\lambda^{\frac{n-1}{2}}(\lambda^{-1}+|x_{1}-y_{1}|)^{-\frac{n-1}{2}}\norm{u(y_{1},\cdot)}_{L_{y'}^{1}}.
\label{kernelbounds}\end{equation}
Following Sogge \cite[Section 2]{sogge93} we may obtain the required $L^{p'}\to{}L^{p}$ estimates via interpolation and Hardy-Littlewood-Sobolev if we also have
$$\norm{\widetilde{W}_{\lambda}(x_{1},y_{1})u}_{L^{2}_{x'}}\lesssim{}\norm{u(y_{1},\cdot)}_{L_{y'}^{2}}.$$

Thus the proof of Proposition~\ref{T1estimate} is completed by the following lemma. 
\end{proof}

\begin{lem}\label{L2lem}
Suppose $|x_{1}-y_{1}|>K\lambda^{-1}$ and $\widetilde{W}_{\lambda}(x_{1},y_{1})$ be given by \eqref{Wtildedef}. 
Then
$$\norm{\widetilde{W}_{\lambda}(x_{1},y_{1})u}_{L_{x'}^{2}}\leq{}C\norm{u(y_{1},\cdot)}_{L^{2}_{y'}}$$
where the constant $C$ is uniform in $x_{1}$, $y_{1}$. 
\end{lem}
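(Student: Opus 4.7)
The plan is a $T^{*}T$ argument combined with non-stationary phase. Writing $T = \widetilde{W}_\lambda(x_1, y_1)$ and $\rho = |x_1-y_1|$, we have $\|T\|^2_{L^2 \to L^2} = \|T^{*}T\|_{L^2 \to L^2}$, so by Schur's test it suffices to show
$$\sup_{y'} \int |L(y', z')|\, dz' \lesssim 1,$$
where $L(y', z') = \int \overline{K(x', y')}\, K(x', z')\, dx'$ is the Schwartz kernel of $T^{*}T$ (the symmetric bound in $z'$ is immediate since $L$ is Hermitian). This is an oscillatory integral in $x'$ with phase $\Phi(x') = \psi(x_1, x', y_1, z') - \psi(x_1, x', y_1, y')$.

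The crucial analytic step is a lower bound $|\nabla_{x'}\Phi(x')| \gtrsim |y'-z'|/\rho$ on the support of the cutoffs. From the expansion $\psi(x,y) = |x - y| + O(|x-y|^2)$ in the coordinates centered at $y$ used in the proof of Proposition~\ref{T1estimate}, one computes that the mixed Hessian is $\partial^2_{x' y'}\psi = -\rho^{-1}I + O(1)$ to leading order, and the leading term dominates uniformly for $\rho$ small (as enforced by the original cutoff that restricts $d(x,y) \lesssim \epsilon$). A mean value theorem applied to $y' \mapsto \nabla_{x'}\psi(x_1, x', y_1, y')$ then yields the claimed lower bound on $|\nabla_{x'}\Phi|$; the restriction $\rho > K\lambda^{-1}$ simply keeps all the resulting estimates meaningful.

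Given the phase lower bound, integration by parts $N$ times in $x'$ produces the decay factor $(1 + \lambda|y'-z'|)^{-N}$: each IBP gains $\rho/(\lambda|y'-z'|)$ from the phase and loses $1/\rho$ from differentiating the amplitude, netting a factor $1/(\lambda|y'-z'|)$ per iteration. Combined with the trivial bound $|L(y',z')| \lesssim \lambda^{n-1}$, obtained from the pointwise estimate $|K|^2 \lesssim \lambda^{n-1}\rho^{-(n-1)}$ (via \eqref{kernelbounds}) times the support volume $\operatorname{vol}(\operatorname{supp}_{x'}) \sim \rho^{n-1}$ imposed by the angular cutoffs $\zeta(|x'-w'|/\rho)$, this gives
$$|L(y', z')| \lesssim C_N\, \lambda^{n-1}(1 + \lambda|y'-z'|)^{-N}.$$
Integrating yields $\sup_{y'}\int |L|\, dz' \lesssim \lambda^{n-1}\cdot \lambda^{-(n-1)} = 1$ uniformly in $(x_1, y_1)$, which completes the proof. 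The main obstacle is the uniform non-degeneracy of $\partial^2_{x' y'}\psi$, which requires $\epsilon$ to be chosen small enough that the Taylor expansion of $\psi$ is effective (so the leading term $-\rho^{-1}I$ dominates the $O(1)$ correction in the mixed Hessian); this is essentially the Carleson-Sjölin condition for the Riemannian distance function, specialized to the frozen-variable setup.
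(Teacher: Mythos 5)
Your proposal is correct and follows essentially the same route as the paper: the paper also passes to the kernel $\widetilde{K}$ of the composed operator, derives the lower bound $|\nabla_{x'}\Phi|\gtrsim |y'-z'|/|x_1-y_1|$ from the mixed Hessian $\partial^2_{x'y'}\psi = -|x-y|^{-1}(\Id+O(\epsilon))$, integrates by parts to get $\lambda^{n-1}(1+\lambda|y'-z'|)^{-N}$ after the support volume $\sim |x_1-y_1|^{n-1}$ cancels the prefactor, and concludes by H\"older--Young (equivalently, your Schur test). The only cosmetic difference is that the paper attributes the nondegeneracy of the mixed Hessian to the angular cutoff forcing $|x'-y'|\leq\epsilon|x_1-y_1|$ rather than to the smallness of $\rho$ alone, but the conclusion and all quantitative steps coincide.
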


\begin{proof}
We have
\begin{equation}\norm{\widetilde{W}_{\lambda}(x_{1},y_{1})u}_{L_{x'}^{2}}^{2}=\int{}\widetilde{K}(t,x_{1},y_{1},y',z')u(y_{1},y')\bar{u}(z_{1},z')dy'dz'\label{Wsquare}\end{equation}
where
\begin{multline}\widetilde{K}(t,x_{1},y_{1},y',z')=\lambda^{n-1}|x_{1}-y_{1}|^{-(n-1)}\int{}e^{i\lambda(\psi(x_{1},x',y_{1},y')-\psi(x_{1},x',y_{1},z')}\\ 
\times\tilde{b}(t,x_{1},y_{1},x',y',z',\lambda)\zeta\left(\frac{|x'-y'|}{|x_{1}-y_{1}|}\right)\zeta\left(\frac{|x'-z'|}{|x_{1}-y_{1}|}\right)dx'\label{Kdef}\end{multline}
where
$$\left|\partial_{x'}^{\alpha}b(t,x_{1},y_{1},x',y',z',\lambda)\right|\leq{}C_{\alpha}\left(\frac{\lambda}{1+\lambda{}d(x,y)}\right)^{|\alpha|}\lesssim{}\frac{1}{|x_{1}-y_{1}|^{|\alpha|}}.$$
We seek to estimate $|\widetilde{K}(t,x_{1},y_{1},y',z')|$ via integration by parts in $x'$. Therefore we expand the phase function
$$\psi(x_{1},x',y_{1},y')-\psi(x_{1},x',y_{1},z')$$
as a Taylor series about $z'=y'$. That is
$$\psi(x_{1},x',y_{1},y')-\psi(x_{1},x',y_{1},z')=\nabla_{y'}\psi(x_{1},x',y_{1},y')\cdot{}(y'-z')+O(|y'-z'|^{2}).$$ 
Now differentiating in $x'$ we have that
$$\nabla_{x'}[\psi(x_{1},x',y_{1},y')-\psi(x_{1},x',y_{1},z')]=[\partial^{2}_{x'y'}\psi](y'-z')+O(|y-z'|^{2})$$
As
$$\psi(x_{1},x',y_{1},y')=|x-y|+O(|x-y|^{2})$$ and 
$|x'-y'|\leq{}\epsilon{}|x-y|$, we have  
$$\partial^{2}_{x'y'}=\frac{-1}{|x-y|}\left[\Id+O(\epsilon)\right]$$
and therefore 
$$\left|\nabla_{x'}[\psi(x_{1},x',y_{1},y')-\psi(x_{1},x',y_{1},z')]\right|>c\frac{|y'-z'|}{|x_{1}-y_{1}|}.$$
Therefore integrating \eqref{Kdef} by parts we gain a factor of $c\lambda^{-1}|y'-z'|^{-1}|x_{1}-y_{1}|$ for each iteration and at worst differentiating produces growth of $|x_{1}-y_{1}|^{-1}$. Therefore we have 
\begin{multline}|\widetilde{K}(t,x_{1},y_{1},y',z')|\leq{}C_{N}\lambda^{n-1}|x_{1}-y_{1}|^{-(n-1)}\left(1+\lambda|y'-z'|\right)^{-N}\\
\times\int{}\tilde{b}(t,x_{1},y_{1},y',z',\lambda)\zeta\left(\frac{|x'-z'|}{|x_{1}-y_{1}|}\right)dx'\\
\leq{}C_{N}\lambda^{\edit{n-1}}(1+\lambda|y'-z'|)^{-N}\label{Kdef2}\end{multline}
as the integrand in \eqref{Kdef2} is only supported on a region $|x'-y'|\leq{}\epsilon|x_{1}-y_{1}|$. Therefore we apply H\"{o}lder and Young to \eqref{Wsquare} to obtain
$$\norm{\widetilde{W}_{\lambda}(x_{1},y_{1})u}_{L_{x'}^{2}}^{2}\lesssim{}\norm{u(y_{1},y')}_{L_{y'}}^{2}$$

as required. 

\end{proof}

\begin{remark}\label{T1remark}

For this small $t-s$ term there is no restriction on how large the parameter $A$ can be. It is from the large $t-s$ term that will will arrive at the restriction that $A$ must grow logarithmically in $\lambda$.
\end{remark}

\begin{remark}\label{fc} One can give an alternative proof of Proposition~\ref{T1estimate}  by showing that $T^{1,+}_{\lambda, A}(t)$ is a Schwartz function $\phi_A(\sqrt{\Delta} - \lambda)$ of $\sqrt{\Delta} - \lambda$, where $\phi_A$ is uniformly bounded in $A \geq 1$ in the sense that every seminorm is uniformly bounded for $A \geq 1$. Then one can show that any Schwartz function $\phi$ of $\sqrt{\Delta} - \lambda$ is bounded from $L^{p'}$ to $L^p$ with norm estimate $C\lambda^{2\delta(n,p)}$, where $C$ depends only on a finite number of seminorms of $\phi$. This is essentially equivalent to the argument in Lemma 2.3 of \cite{bourgain12}. 
\end{remark}

\begin{remark} One can avoid the use of Lemma~\ref{L2lem} with the observation that 
\eqref{T1tdef} is a semiclassical Fourier Integral operator. In fact, if we localize the symbol in \eqref{T1tdef} in the variable $\omega = \xi/|\xi|$ near some direction, which (by rotating in the $\xi$ coordinate) we may assume to be the $e_1$ direction, and then fix the variables $x_1$, $y_1$, the resulting frozen operator, acting in the remaining $n-1$ variables, is an FIO of order zero associated to a canonical graph, and therefore $L^2$ bounded. This localized and frozen operator is analogous to $\widetilde{W_\lambda}$ above, with the only difference being that the localization is done in the $\xi$ variables before stationary phase, instead of in the spatial variables directly. 
\end{remark}

We now treat $T^{2}_{\lambda,A}$.

\begin{prop}\label{T2estimate}
Suppose $T^{2}_{\lambda,A}$ is given by \eqref{T2def} then for any $p$ there exists an $\alpha_{p}$ and a $C_{p}$ such that
with $A = \alpha_p \log \lambda$ we have 
$$\norm{T^{2}_{\lambda,A}u}_{L^{p}}\leq{}C_{p}\frac{\lambda^{2\delta(n,p)}}{\log\lambda}$$
\end{prop}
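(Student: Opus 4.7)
The plan is to adapt Bérard's lift-to-the-universal-cover technique to the $L^{p'}\to L^p$ setting, combining it with the parametrix-and-stationary-phase machinery already used in Proposition \ref{T1estimate}. First I would rescale $s'=A(t-s)$ in \eqref{T2def}, which converts $T^{2}_{\lambda,A}$ to
$$T^{2}_{\lambda,A}=\frac{1}{A}\int e^{-i\lambda s'}\cos\bigl(s'\sqrt{\Delta}\bigr)\bigl(1-\zeta(s')\bigr) G(s',A)\,ds',$$
where $G(s',A)=\int\hat\chi(t)\hat\chi(t-s'/A)\,dt$ is uniformly smooth and supported in $\epsilon\leq|s'|\leq\epsilon A$. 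The explicit prefactor $1/A$ is what ultimately produces the $1/\log\lambda$ improvement.

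Next I would use finite speed of propagation to lift to the universal cover $\widetilde{M}$. Writing $\Gamma$ for the deck group,
$$\cos\bigl(s'\sqrt{\Delta}\bigr)(x,y)=\sum_{\gamma\in\Gamma}\cos\bigl(s'\sqrt{\Delta_{\widetilde{M}}}\bigr)(\tilde x,\gamma\tilde y),$$
with only $\gamma$ satisfying $d_\gamma(x,y):=d_{\widetilde{M}}(\tilde x,\gamma\tilde y)\leq|s'|\leq\epsilon A$ contributing. Under the hypotheses of Theorem \ref{maintheorem}, $\widetilde{M}$ has no conjugate points (by Cartan--Hadamard in case (ii)); hence the Hadamard parametrix for $e^{\pm is'\sqrt{\Delta_{\widetilde{M}}}}$ is defined globally on $\widetilde{M}\times\widetilde{M}$, with phase function given by the Riemannian distance.

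For each fixed $\gamma$, I would substitute this global parametrix into the $\gamma$-summand and perform stationary phase in $s'$ and in the angular $\xi$-variables exactly as in the proof of Proposition \ref{T1estimate}. Only the half-wave term with phase $e^{-is'|\xi|}$ produces a stationary point (pairing with the oscillation $e^{-i\lambda s'}$); a second stationary phase in $|\xi|$ near $\lambda$ then yields
$$S_\gamma(x,y)\sim\frac{\lambda^{(n-1)/2}}{A}\,e^{i\lambda d_\gamma(x,y)}\,b_\gamma(x,y,\lambda),$$
where $b_\gamma$ satisfies symbol bounds of the same shape as the amplitude $b$ in Proposition \ref{T1estimate}, with an additional Van Vleck--Morette factor $\theta_\gamma(x,y)^{-1/2}$; here $\theta_\gamma$ is the Jacobian of the exponential map based at $\gamma\tilde y$. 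Rauch comparison in nonpositive curvature gives $\theta_\gamma\geq d_\gamma^{n-1}$, so this factor provides decay $\theta_\gamma^{-1/2}\lesssim d_\gamma^{-(n-1)/2}$ at large distances. Repeating the freezing argument of Proposition \ref{T1estimate} and Lemma \ref{L2lem} in coordinates adapted to the direction from $\gamma\tilde y$ to $\tilde x$ then yields a per-$\gamma$ estimate of the form $\|S_\gamma\|_{L^{p'}\to L^p}\lesssim\lambda^{2\delta(n,p)}\,w_\gamma/A$, with $w_\gamma$ a decaying weight inherited from $\theta_\gamma^{-1/2}$.

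The main obstacle will be the summation over $\gamma$: the volume comparison theorem in nonpositive curvature gives $\#\{\gamma:d_\gamma\leq R\}\lesssim e^{cR}$ for a manifold-dependent constant $c$, so a naive sum of the per-$\gamma$ estimates incurs an exponential factor $e^{c\epsilon A}$ that, for $A=\alpha_p\log\lambda$, is a positive power of $\lambda$. The resolution is to combine the geometric weight $w_\gamma$ with the quasi-disjointness of the supports of the distinct $S_\gamma$: each pair $(x,y)$ lies on only boundedly many shells $\{d_{\widetilde{M}}(\tilde x,\gamma\tilde y)\approx\mathrm{const}\}$, because the shell thickness $\sim 1/\lambda$ is much smaller than the separation between the images $\gamma\tilde y$. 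This permits trading a sum of operator norms for a pointwise supremum of kernels, and a suitable interpolation between the resulting $L^1\to L^\infty$ estimate and the easy $L^2\to L^2$ bound for a spectral window of width $1/A$ then delivers the target. With $\epsilon$ (which enters only the support of $\hat\chi$) small and $\alpha_p$ chosen small in terms of $p$ and the geometric constants, the residual exponential growth is absorbed into the desired bound $\lambda^{2\delta(n,p)}/\log\lambda$.
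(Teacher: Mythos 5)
Your overall architecture --- rescale, lift to $\widetilde{M}$ by finite propagation speed, bound the number of contributing deck transformations by $e^{cA}$, and finish by interpolating an $L^1\to L^\infty$ kernel bound against the trivial $L^2\to L^2$ bound --- is the paper's argument, and your final sentence correctly identifies the absorption mechanism: choose $\alpha_p$ so small that the $e^{c\epsilon A}=\lambda^{c\epsilon\alpha_p}$ loss is $\lambda^{\eta}$ with $\eta$ arbitrarily small, and use the fact that for $p$ strictly above $\frac{2(n+1)}{n-1}$ the interpolated exponent $\frac{n-1}{2}-\frac{n-1}{p}$ lies strictly below $2\delta(n,p)$, leaving room to absorb $\lambda^{\eta}$ (which is exactly why the result degenerates at the kink point). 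However, one of your intermediate steps would fail as described. The ``quasi-disjointness of the supports of the $S_\gamma$'' does not hold: after stationary phase in $s'$ the $\gamma$-summand is supported (up to rapidly decreasing tails) on the full region $\epsilon\lesssim d_{\widetilde{M}}(\tilde x,\gamma\tilde y)\lesssim\epsilon A$, not on a thin shell of thickness $1/\lambda$, so for a fixed pair $(x,y)$ exponentially many $\gamma$ genuinely contribute and no bounded-overlap argument is available. The paper makes no attempt to exploit disjointness, nor to prove per-$\gamma$ $L^{p'}\to L^p$ bounds via the freezing argument; it only needs the pointwise bound on the \emph{summed} kernel (Lemma~\ref{kernel-estimate}), accepts the $\lambda^{O(\alpha)}$ loss from the number of terms, and interpolates with the trivial $L^2\to L^2$ bound coming from $|\cos t|\le 1$. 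Your per-$\gamma$ operator-norm estimates are thus an unnecessary detour: summing them cannot close, and the interpolation you invoke at the end supersedes them entirely.

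A second, smaller point concerns the geometric input. The Van Vleck factor $\theta_\gamma^{-1/2}$ controls only the leading Hadamard coefficient, and its polynomial decay in $d_\gamma$ is powerless against the exponential count of deck transformations; it plays no role in the paper. What is actually needed --- and what B\'erard proves under the hypotheses of Theorem~\ref{maintheorem} --- is that \emph{all} the coefficients $u_k$ in the parametrix \eqref{parametrix} and the remainder $\tilde\epsilon_N$ grow at most exponentially in $d(x,y)$, respectively $t$ (estimates \eqref{ukest} and \eqref{epsilonNest}); this is where nonpositive curvature, or absence of conjugate points in dimension two, enters. The paper also evaluates the $s'$-integral directly from the distributional structure of $\chi_+^{k-(n+1)/2}(d^2-{s'}^2)$ (a derivative of $\delta$ for odd $n$, a $\chi_+^{-3/2}$ singularity for even $n$), each term yielding $\lambda^{(n-1)/2}$ times powers of $d$; your stationary-phase route through the half-wave parametrix should produce the same $\lambda^{(n-1)/2}$ per term, but you would still need the exponential bounds on the full coefficient sequence and remainder, uniformly out to distance $\epsilon A$, to conclude.
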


\begin{proof}
We have
$$T^{2}_{\lambda,A}u=\iint{}e^{-i\lambda{}A(t-s)}\cos \big(\edit{A}(t-s)\sqrt{\Delta} \big)\hat{\chi}(t)\hat{\chi}(s)(1-\zeta(A(t-s)))udtds$$
After scaling $t\to{}At$ and $s\to{}As$ and setting $s' = t-s$  we obtain
$$T^{2}_{\lambda,A}u=\frac{1}{A^{2}}\iint{}e^{-i\lambda{}s'}\cos \big(s'\sqrt{\Delta} \big) \hat{\chi}\left(\frac{t}{A}\right)\hat{\chi}\left(\frac{t-s'}{A}\right)(1-\zeta(s'))u \, dt \, ds'.$$

We estimate this term by interpolating between $L^2 \to L^2$ estimates and $L^1 \to L^\infty$ estimates. Using the bound $|\cos t| \leq 1$ we get a trivial  $L^2 \to L^2$ estimate for $T^{2}_{\lambda,A}$: 
\begin{equation}
\big\| T^{2}_{\lambda,A} \big\|_{L^2 \to L^2} \leq C,
\label{L2L2}\end{equation}
uniformly in $\lambda$ and $A$. 

An $L^1 \to L^\infty$ estimate on $T^{2}_{\lambda,A}$ is equivalent to a pointwise kernel estimate. To obtain such an estimate we express the cosine kernel on $M$ in terms of the cosine kernel on 
the universal cover $\widetilde{M}$, following B\'{e}rard. This exploits the  fact that the universal cover $\widetilde{M}$ has no conjugate points. B\'{e}rard \cite[Section D]{berard77} expresses the cosine kernel $\tilde{e}(t,x,y)$ on $\widetilde{M}$ in terms of a finite series plus an error term:
\begin{equation}
\tilde{e}(t,x,y) = C_0 \sum_{k=0}^N (-1)^k 4^{-k}  u_k(x,y) |t| \;\chi_+^{k- (n+1)/2}\Big(d(x,y)^2 - t^2\Big) + \tilde \epsilon_N(t, x, y),
\label{parametrix}\end{equation}
where $d(x,y)$ is the Riemannian distance between $x$ and $y$ on $\widetilde{M}$, and $\chi_+^a(r)$ is the analytic continuation of $r_+^a/\Gamma(a+1)$ in the parameter $a$ from the region $\Re a > -1$. It is shown in \cite{berard77} that under the assumptions of Theorem~\ref{maintheorem}, $u_k$ satisfies the estimate 
\begin{equation}
\Big| u_k(x, y) \Big| \leq C_{k} e^{c_{k} d(x,y)}, 
\label{ukest}\end{equation}
and that provided that $N > d+1$, $\tilde\epsilon_N$ is a continuous function  satisfying the estimate 
\begin{equation}
\Big| \tilde \epsilon_N(t, x,y) \Big| \leq C_N e^{c_N t}.
\label{epsilonNest}\end{equation}

The cosine kernel $e(t, x, y)$  on $M$ is related to that  on $\widetilde{M}$ by 
\begin{equation}
e(t, x, y) = \sum_{\gamma \in \Gamma} \tilde e(t, x, \gamma y)
\label{eetilde}\end{equation}
where $\Gamma$ is the fundamental group of $M$, acting on $\widetilde{M}$ by deck transformations. Here we make crucial use of the fact that the cosine kernel is supported where $d(x,y) \leq t$: this means that for fixed $x, y \in M$ the sum in \eqref{eetilde} is finite for each $t$. In fact, since $x, y$ may be viewed as points in $\widetilde M$ in the same fundamental domain, i.e. distance $O(1)$ apart in $\widetilde M$, the number of terms in the sum \eqref{eetilde} is $O(e^{C|t|})$ for some $C$, since $\widetilde{M}$ has bounded sectional curvatures and therefore the volume growth of balls is at most exponential. 

\begin{lem}\label{kernel-estimate} Let $\eta > 0$ be given.  Suppose that $A = \alpha \log \lambda$. Then if $\alpha$ is sufficiently small and $\lambda$ sufficiently large, we have 
\begin{equation}
\frac1{A} \bigg| \int e^{\edit{-i\lambda s'}}  e(s', x, y) \, \hat{\chi}\left(\frac{t-s'}{A}\right)(1-\zeta(s')) \, ds' \bigg| \leq C  \lambda^{(n-1)/2 + \eta} 
\label{int-est}\end{equation}
uniformly for $t \in [0, 2\epsilon A]$ and $(x, y) \in M^2$. 
\end{lem}

\begin{proof}[Proof of Lemma~\ref{kernel-estimate}]
We express the cosine kernel $e(t, x, y)$ via \eqref{eetilde} and decompose $\tilde e$ as in \eqref{parametrix}, where we choose $N = n/2 +1$ if $n$ is even and $N = n/2 + 1/2$ if $N$ is odd. Let
$$
\epsilon_N(s', x, y) = \sum_{\gamma \in \Gamma} \tilde \epsilon_N(s', x, \gamma y).
$$
Then since $\hat \chi$ is supported in $[\epsilon, 2\epsilon]$, the
integrand in \eqref{int-est} is zero unless $|s'| \leq 2A \epsilon$.
In that case, due to the estimate \eqref{epsilonNest} and the exponential estimate of the number of nonzero terms in this sum, we find that 
$$
|\epsilon_N(s', x, y)| \leq C e^{(C_N + C') \alpha A},
$$
showing that for $A = \alpha \log \lambda$ and $\alpha$ sufficiently small, the contribution of the $\epsilon_N$ term to \eqref{int-est} is $O(\lambda^{\eta})$ for any $\eta > 0$. 

It therefore suffices to check that the contribution of each of the $u_k$ terms in \eqref{parametrix} gives a $O(\lambda^{(n-1)/2 + \eta})$ contribution to \eqref{int-est}. 

For each $k \leq N$, we consider the term 
\begin{equation}
e_k(t, x, y) = \sum_{\gamma \in \Gamma} \tilde e_k(t, x, \gamma y)
\label{eetildek}\end{equation}
where $\tilde e_k$ is the $k$th term in the sum \eqref{parametrix}. Since, as we have already observed, there are at most $e^{c\alpha \log \lambda}$ terms in this sum, and $e^{c\alpha \log \lambda} \leq C \lambda^{\eta/3}$ for sufficiently small $\alpha$, it suffices to estimate a single term in this sum over $\gamma \in \Gamma$. Similarly, we have an estimate $$|u_k( x, y)| \leq C_k e^{c_{k}  d(x,y)} \leq C_k e^{c_{k}  s' } \leq C_k e^{c_k \epsilon \alpha \log \lambda}$$ on the support of this term, again using finite propagation speed,  which is bounded by  $C \lambda^{\eta/3}$
 for sufficiently small $\alpha$. So it suffices to obtain an estimate 
\begin{equation}
\frac1{A} \bigg| \int e^{\edit{-i\lambda s'}} \chi_+^{k- (d+1)/2}(d(x, \gamma y)^2 - {s'}^2) \, \hat{\chi}\left(\frac{t-s'}{A}\right)(1-\zeta(s')) \, ds' \bigg| \leq C  \lambda^{(n-1)/2 + \eta/3 } .
\label{int-est-k}\end{equation}
To verify this for odd $n$ we split the integral into the parts where $s' > 0$ and where $s' < 0$ and consider first the former. We
use the fact that for positive integers $m$, the distribution $\chi_+^{-m}$ is equal to $\delta^{(m-1)}$, the $(m-1)$th derivative of the delta function, which is homogeneous of degree $-m$, and hence 
$$
\chi_+^{k- (n+1)/2}(d(x, \gamma y)^2 - {s'}^2) = \delta^{(n-1)/2-k} (d(x, \gamma y) - {s'}) (d(x, \gamma y) + {s'})^{k- (n+1)/2}. 
$$
The integral therefore involves taking $(n-1)/2-k$ derivatives in $s'$ of 
$$
e^{\edit{-i\lambda s'}} (d(x, \gamma y) + {s'})^{k- (n+1)/2} \, \hat{\chi}\left(\frac{t-s'}{A}\right)(1-\zeta(s'))
$$
and then setting $s' = d(x, \gamma y)$. This is clearly bounded by $\lambda^{(n-1)/2}$ times a power of $d(x, \gamma y)$ and since we have
$d(x, \gamma y) \leq 2\epsilon \alpha \log \lambda$ by finite propagation speed, we see that this is bounded by $C\lambda^{(n-1)/2 + \eta/3 }$. The integral for $s' < 0$ is estimated in exactly the same way. 

For even $n$, we write 
\begin{multline}
\chi_+^{k- (n+1)/2}(d(x, \gamma y)^2 - {s'}^2) = c_{k,n} \chi_+^{k- (n+1)/2}(d(x, \gamma y) - {s'}) (d(x, \gamma y) + {s'})^{k- (n+1)/2} \\
=  \bigg( \Big( \frac{d}{ds'} \Big)^{n/2 - 1 - k} \chi_+^{-3/2}(d(x, \gamma y) - s') \bigg) (d(x, \gamma y) + {s'})^{k- (n+1)/2}.
\end{multline}
We use this in the integral \eqref{int-est-k} and integrate by parts  $n/2 - 1 - k$ times. The integrand becomes  a sum of terms each of which takes the form 
\begin{equation}
\frac{c_{k,n}}{A^{-j_3}} \lambda^{j_1} e^{\edit{-i\lambda s'}} \chi_+^{-3/2}(d(x, \gamma y) - s') 
(d + s')^{-(n+1)/2 +k - j_2} \hat{\chi}^{(j_3)}\left(\frac{t-s'}{A}\right) (1 - \zeta)^{(j_4)}(s')
\label{integrand-terms}\end{equation}
where $j_1, j_2, j_3, j_4 \geq 0$ and $j_1 + j_2 + j_3 +j_4 = n/2 - 1- k$.
Let us write this term 
$$
A^{-j_3}\lambda^{j_1} e^{\edit{-i\lambda s'}} \chi_+^{-3/2}(d(x, \gamma y) - s') 
B(s'),
$$
i.e. $B(s') = (d + s')^{-(n+1)/2 +k - j_2} \hat{\chi}^{(j_3)}\left(\frac{t-s'}{A}\right) (1 - \zeta)^{(j_4)}(s')$. 
We isolate the singularity of $\chi_+^{-3/2}(d - s')$, $d = d(x, \gamma y)$,  by writing 
\begin{equation}\begin{gathered}
\chi_+^{-3/2}(d - s') B(s')= \\
 \chi_+^{-3/2}(d - s') \zeta(d - s') B(d)  + 
\chi_+^{-3/2}(d - s') \Big( B(s') - \zeta(d - s') B(d) \Big) .
\end{gathered}\label{sing}\end{equation}
We substitute \eqref{sing} into \eqref{integrand-terms} and integrate in $\lambda$. Taking the first term on the right hand side of \eqref{sing}, the integral is equal to  
$$
e^{i\lambda d} \Big( (\lambda - i0)^{j_1 + 1/2}  * \hat \zeta(\lambda) \Big) B(d),
$$
and is bounded by for large $\lambda$ by $\lambda^{(n-1)/2}$ times a power of $d(x, y)$, hence bounded by $\lambda^{(n-1)/2 + \eta/3}$ when $\alpha$ is sufficiently small. Taking now the second term on the right hand side of \eqref{sing}, observe that the term $B(s') - \zeta(d - s') B(d)$ is smooth and vanishes at $s' = d$, hence the singularity at $s' = d$ is reduced to an integrable singularity. Therefore the integrand in this case is a polynomial in $s'$ times a function that is absolutely integrable uniformly in the parameters $x, y, t$. Now for $t\in[0,2\epsilon{}A]$, $\hat{\chi}\left(\frac{t-s}{A}\right)$ is supported only for $|s'|\leq{}4\epsilon{}A$ so this term contributes $\lambda^{n/2 - 1 + \eta/3}$, which is smaller than required. This completes the proof of Lemma~\ref{kernel-estimate}. 

\end{proof}

\noindent\emph{Continuation of proof of Proposition~\ref{T2estimate}.} 
It is an immediate consequence of Proposition~\ref{kernel-estimate} that we have a kernel bound 
\begin{equation}
\big\| T^{2}_{\lambda,A} \big\|_{L^1 \to L^\infty} \leq C \lambda^{(n-1)/2 + \eta},
\label{L1Linf}\end{equation}
for any $\eta > 0$. 
Interpolating between \eqref{L2L2} and \eqref{L1Linf} we obtain
$$\norm{T^{2}_{\lambda,A}u}_{L^{p}}\lesssim \lambda^{\frac{n-1}{2}-\frac{n-1}{p} + \eta(1-\frac{2}{p})}\norm{u}_{L^{p'}}, \quad p > 2$$
for any $\eta > 0$, which we can write using \eqref{deltanp}
$$\norm{T^{2}_{\lambda,A}u}_{L^{p}}\lesssim{\lambda^{2\delta(n,p)-\frac{n-1}{2}+\frac{n+1}{p} + \eta(1-\frac{2}{p})}}\norm{u}_{L^{p'}}. $$
Therefore, provided we have 
$$-\frac{n-1}{2}+\frac{n+1}{p}<0$$
or $p>\frac{2(n+1)}{n-1}$
we can choose $\eta$ sufficiently small so that $-\frac{n-1}{2}+\frac{n+1}{p} + \eta(1-\frac{2}{p}) < 0$, and we find that there exists $\eta' = \eta'(p) > 0$ such that 
$$\norm{T^{2}_{\lambda,A}u}_{L^{p}}\lesssim \lambda^{2\delta(n,p)-\eta'}\norm{u}_{L^{p'}}$$
which is a stronger result than claimed in Proposition~\ref{T2estimate}. 
\end{proof}

We finally have
\begin{prop}\label{T3estimate}
Suppose $T^{3}_{\lambda,A}$ is given by \eqref{T3def} then for any $p$ there exists an $\alpha_{p}$ and a $C_{p}$ such that with $A = \alpha_p \log \lambda$ we have 
$$\norm{T^{3}_{\lambda,A}u}_{L^{p}}\leq{}C_{p}\frac{\lambda^{2\delta(n,p)}}{\log\lambda}$$
\end{prop}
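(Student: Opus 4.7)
The plan is to exploit a structural feature of $T^3_{\lambda,A}$ that is absent in $T^1$ and $T^2$: the argument $(t+s)$ of the cosine kernel and the argument $(t-s)$ of the oscillation $e^{-i\lambda A(t-s)}$ are independent variables, so the $t$- and $s$-integrals separate and produce a spectral multiplier that is rapidly decaying on the nonnegative spectrum of $\sqrt{\Delta}$. I expect to obtain $\|T^3_{\lambda,A}\|_{L^{p'}\to L^p} = O_N((A\lambda)^{-N})$ for every $N$, which vastly exceeds the claim; no finite-speed argument or B\'erard-type expansion is actually needed here.

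First I would expand $\cos(A(t+s)\mu) = \tfrac12(e^{iA(t+s)\mu}+e^{-iA(t+s)\mu})$ and separate the $t$- and $s$-integrals to write $T^3_{\lambda,A} = F_3(\sqrt{\Delta})$ with
$$
F_3(\mu) = \tfrac12\tilde\chi\bigl(A(\mu-\lambda)\bigr)\tilde\chi\bigl(A(\mu+\lambda)\bigr) + \tfrac12\tilde\chi\bigl(-A(\mu+\lambda)\bigr)\tilde\chi\bigl(-A(\mu-\lambda)\bigr),
$$
where $\tilde\chi(\eta) := \int e^{it\eta}\hat\chi(t)\,dt$ is Schwartz (in fact band-limited, since $\hat\chi$ is $C^\infty_c$). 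For $\mu$ in the spectrum of $\sqrt{\Delta}$ one has $A(\mu+\lambda)\geq A\lambda$, so by the Schwartz decay of $\tilde\chi$ the factor $\tilde\chi(\pm A(\mu+\lambda))$ is $O_N((A\lambda)^{-N})$ for every $N$, uniformly in $\mu\geq 0$. This immediately yields the trivial bound $\|T^3_{\lambda,A}\|_{L^2\to L^2}\leq C_N(A\lambda)^{-N}$.

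To upgrade this to an $L^{p'}\to L^p$ bound I would apply the functional-calculus principle mentioned in Remark~\ref{fc}: any Schwartz function $\phi$ of $\sqrt{\Delta}-\lambda$ is bounded from $L^{p'}$ to $L^p$ by $C\lambda^{2\delta(n,p)}$, with $C$ depending only on a finite number of Schwartz seminorms of $\phi$. Writing $F_3(\mu) = \phi_{A,\lambda}(\mu-\lambda)$, each $\nu$-derivative of $\phi_{A,\lambda}$ still retains a factor $\tilde\chi^{(j)}(\pm A(\nu+2\lambda))$, which remains $O_N((A\lambda)^{-N})$ on $\nu\geq-\lambda$; the harmless factors of $A$ produced by differentiating $\tilde\chi(\pm A\nu)$ are absorbed by taking $N$ larger than the seminorm order. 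Hence every relevant Schwartz seminorm of $\phi_{A,\lambda}$ is $O_N((A\lambda)^{-N})$, giving $\|T^3_{\lambda,A}\|_{L^{p'}\to L^p}\leq C_N\lambda^{-N}$ for any $N$, far stronger than $\lambda^{2\delta(n,p)}/\log\lambda$. An alternative route avoiding Remark~\ref{fc} is to interpolate the $L^2\to L^2$ estimate with a crude polynomial-in-$\lambda$ $L^1\to L^\infty$ bound obtained exactly as in Proposition~\ref{T2estimate} via the B\'erard expansion of the cosine kernel summed over $O(\lambda^{C\alpha})$ deck transformations; the super-polynomial $L^2\to L^2$ decay then dominates all polynomial factors after Riesz--Thorin. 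Either way, $T^3_{\lambda,A}$ is genuinely negligible and is never the bottleneck in the proof of Theorem~\ref{maintheorem}.
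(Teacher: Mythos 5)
Your argument is correct, but it takes a genuinely different route from the paper's. The paper disposes of $T^{3}_{\lambda,A}$ in one sentence: since $\hat\chi$ is supported in $[\epsilon,2\epsilon]$, the variable $t+s$ is bounded below by $2\epsilon$, so the entire proof of Proposition~\ref{T2estimate} (B\'erard's parametrix for the cosine kernel on the universal cover, the exponential count of deck transformations, the resulting $L^1\to L^\infty$ bound $C\lambda^{(n-1)/2+\eta}$, and interpolation against the trivial $L^2\to L^2$ bound $O(1)$) applies verbatim with $t-s$ replaced by $t+s$; this yields the stated gain only for $p>\tfrac{2(n+1)}{n-1}$ and only for $\alpha_p$ small, exactly as for $T^2$. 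Your separation-of-variables observation is the sharper one: writing $e^{-i\lambda A(t-s)}e^{\pm iA\mu(t+s)}$ as a product of a $t$-factor and an $s$-factor shows $T^3_{\lambda,A}=F_3(\sqrt{\Delta})$ with your formula for $F_3$, and since one of the two $\tilde\chi$ factors in each summand is always evaluated at $\pm A(\mu+\lambda)$ with $|A(\mu+\lambda)|\geq A\lambda$ on the spectrum $\mu\geq 0$, the multiplier is $O_N((A\lambda)^{-N})$ there; consequently $\|T^3_{\lambda,A}\|_{L^{p'}\to L^p}=O_N(\lambda^{-N})$ for every $N$ and every $p\geq 2$, with no constraint on $p$ or on $\alpha_p$ coming from this term. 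Both of your completion routes are sound. Route 2 is self-contained given the Proposition~\ref{T2estimate} machinery, since Riesz--Thorin gives $\|T^3\|_{L^{p'}\to L^p}\leq \|T^3\|_{L^1\to L^\infty}^{1-2/p}\,\|T^3\|_{L^2\to L^2}^{2/p}$ and the super-polynomial factor wins. Route 1 via Remark~\ref{fc} has one small caveat: the functional-calculus lemma requires Schwartz seminorms of $\phi_{A,\lambda}$ on all of $\R$, not merely on $\nu\geq-\lambda$; this is harmless because for every $\nu\in\R$ one has $\max(|\nu|,|\nu+2\lambda|)\geq\lambda$, so at least one $\tilde\chi$ factor is always evaluated at an argument of size at least $A\lambda$ and the full-line seminorms are indeed $O_N((A\lambda)^{-N})$ (the polynomial weights $|\nu|^k$ cost only powers of $\lambda$, absorbed by taking $N$ large). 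In short: the paper's route buys uniformity of treatment with $T^2$ at no extra cost; yours buys the recognition that $T^3$ is never a genuine contribution and that the logarithmic restriction on $A$ in Theorem~\ref{maintheorem} is forced by $T^2$ alone.
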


This proposition is proved in exactly the same way as for Proposition~\ref{T2estimate}, so we omit the details. (Note that in \eqref{T3def}, we have $t, s \in [\epsilon, 2\epsilon]$ on the support of the integrand, so that $t + s \geq 2\epsilon$; in particular, $t+s$ is bounded away from zero so we only need consider the cosine kernel for times bounded away from zero, just as for $T^{2}_{\lambda,A}$.)

Putting together Propositions~\ref{T1estimate}, \ref{T2estimate} and \ref{T3estimate} with the choice $A = \alpha_p \log \lambda$, we obtain the proof of Theorem~\ref{maintheorem}.

\bibliography{logpaper_ref}
\bibliographystyle{abbrv}

\end{document}